\newtheorem{theorem}{Theorem}[section]
\newtheorem{corollary}[theorem]{Corollary}
\newtheorem{lemma}[theorem]{Lemma}
\newtheorem{proposition}[theorem]{Proposition}
\theoremstyle{definition}
\newtheorem{remark}{Remark}[section]
\renewenvironment{proof}{{\bf \noindent Proof.}}{\hfill $\square$\vspace{0.25cm}} 
\def\E{\mathcal{E}}
\def\f{\longrightarrow}
\def\N{\mathbb{N}}
\def\e{\varepsilon}
\def\l{\lambda}
\def\a{\bar{a}}
\def\<{\langle}
\def\>{\rangle}
\def\e{\varepsilon}
\def\R{\mathbb{R}}
\def\O{\mathcal{O}}
\def\inte{\textnormal{int}\,}
\def\clo{\textnormal{cl}\,}
\def\epi{\textnormal{epi}\,}
\def\bdry{\textnormal{bdry}\,}
\def\proj{\textnormal{proj}\,}
\def\nvarphi{\bp\nvarphi}
\def\sp{\hspace{0.015cm}}
\def\bp{\hspace{-0.08cm}}
\def\bbbp{\hspace{-0.01cm}}
\def\df{\textnormal{dfar}}
\def\far{\textnormal{far}}
\def\proj{\textnormal{proj}}
\newcommand{\Largeish}{\fontsize{14.5}{16.5}\selectfont} % Define custom size
\newcommand{\Largeishsub}{\fontsize{13.5}{15.5}\selectfont} % Define custom size
\renewcommand{\section}{\@startsection{section}{1}{0pt}%
  {1em}{1em}{\normalfont\Largeish\bfseries}}
\titleformat{\subsection}[block] {\normalfont\Largeishsub\bfseries}{\thesubsection}{1em}{}
\newcommand*\rel@kern[1]{\kern#1\dimexpr\macc@kerna}
\newcommand*\widebar[1]{%
  \begingroup
  \def\mathaccent##1##2{%
    \rel@kern{0.8}%
    \overline{\rel@kern{-0.8}\macc@nucleus\rel@kern{0.2}}%
    \rel@kern{-0.2}%
  }%
  \macc@depth\@ne
  \let\math@bgroup\@empty \let\math@egroup\macc@set@skewchar
  \mathsurround\z@ \frozen@everymath{\mathgroup\macc@group\relax}%
  \macc@set@skewchar\relax
  \let\mathaccentV\macc@nested@a
  \macc@nested@a\relax111{#1}%
  \endgroup
}
\begin{document}

\title[New Characterizations of Strong Convexity]{New Characterizations of Strong Convexity}

%    Remove any unused author tags.
\author[C. Nour and J. Takche]{}
%    author one information
\author{Chadi Nour$^*$}
\address{Department of Computer Science and Mathematics, Lebanese American University, Byblos Campus, P.O. Box 36, Byblos, Lebanon}
%\curraddr{}
\email{cnour@lau.edu.lb}
\thanks{$^*$Corresponding author}

%    author two information
\author{Jean Takche}
%\address{Department of Computer Science and Mathematics, Lebanese American University, Byblos Campus, P.O. Box 36, Byblos, Lebanon}
%\curraddr{}
\email{jtakchi@lau.edu.lb}
%\thanks{}

\subjclass{52A20, 49J52, 49J53}

\keywords{Strong convexity, prox-regularity, exterior sphere condition, $S$-convexity, epi-Lipschitz property, convex analysis, nonsmooth analysis, proximal analysis}

%\date{}

\begin{abstract} Parallel to the main results of \cite{JCA2009} and \cite{JCA2018b}, which explore the equivalence between prox-regularity, the exterior sphere condition, and $S$-convexity, we present novel characterizations of the $r$-strong convexity property, namely, of the sets that can be expressed as the intersection of closed balls with the same radius $r > 0$.
\end{abstract}
\maketitle
\vspace{-0.7cm}
\section{Introduction} Let $A\subset\R^n$ be a nonempty and closed set. For $r>0$, we recall that the set $A$ is said to be {\it $r$-prox-regular}, if for all $a\in\bdry A$, the boundary $A$, and for {\it all} nonzero $\zeta\in N_A^P(a)$, the {\it proximal normal} to $A$ at $a$, the vector $\zeta$ is realized by an $r$-sphere, that is, \begin{equation*} \label{prox-regular} \left\<\frac{\zeta}{\|\zeta\|},x-a\right\>\leq \frac{1}{2r}\|x-a\|^2,\;\forall x\in A\;\;\;\; \left[\hbox{or equivalently}\; B\left(a+\frac{\zeta}{\|\zeta\|};r\right)\cap A=\emptyset\right],\end{equation*}
where $B(y;\rho)$ denotes the open ball of radius $\rho$ centered at $y$. For more information about prox-regularity and its related properties, including {\it positive reach}, {\it proximal smoothness}, {\it $p$-convexity}, and {\it $\varphi_0$-convexity}, refer to \cite{canino,csw,cm,fed,prt,shapiro}. On the other hand, we say that $A$ satisfies the {\it exterior $r$-sphere condition}, if for all $a\in\bdry A$, there {\it exists} a nonzero $\zeta_a\in N_A^P(a)$, such that $\zeta_a$ is realized by an $r$-sphere. It is worth noting that the exterior $r$-sphere condition, known from differential geometry, has numerous applications in various fields such as partial differential equations, optimal control, etc.; see, e.g., \cite{cannarsa}.

One can easily see that if $A$ is $r$-prox-regular then $A$ satisfies the exterior $r$-sphere condition. The converse is not necessarily true, as illustrated in \cite[Example 2.5]{JCA2009}. In this latter, a nonempty and closed set $A\subset\R^n$ satisfying the exterior $1$-sphere condition but fails to be $r$-prox-regular for {\it any} $r>0$ is provided as a counterexample. In the same paper,  Nour, Stern, and Takche proved in \cite[Corollary 3.12]{JCA2009} that if $A$ is {\it epi-Lipschitz} with compact boundary, then the two properties are equivalent. More precisely, under these assumptions, if $A$ satisfies the exterior $r$-sphere condition, then $A$ is $r'$-prox-regular for some $r'>0$. We recall that $A$ is said to be epi-Lipschitz (or {\it wedged}\,) if for every boundary point $a$, the set $A$ can be viewed in a neighborhood of $a$ and after the application of an orthogonal matrix, as the epigraph of a Lipschitz continuous function. This geometric definition, introduced by Rockafellar in \cite{rock}, can also be characterized by the nonemptiness of the topological interior of the Clarke tangent cone, which is equivalent to the pointedness of the Clarke normal cone; see \cite{clsw,rock}. Note that if $A$ is convex, then $A$ is  epi-Lipschitz if and only if $A$ has nonempty interior. Furthermore, an epi-Lipschitz set $A$ equals to the closure of its interior.

Inspired by certain convexity-type properties of the reachable sets of nonlinear control systems, Frankowska and Olech, in \cite{frankowska}, established the {\it $r$-strong convexity} (for some $r > 0$) of the integral of a multivalued mapping $M\colon [0, 1] \rightrightarrows\mathbb{R}^n$ under specific conditions. The definition used for $r$-strong convexity of $A\subset\R^n$ in \cite{frankowska} is that $A$ is the intersection of closed balls of same radius $r>0$. In the same paper, see \cite[Proposition 3.1]{frankowska} (see also \cite[Theorem 2.1]{Goncharov} and \cite[Theorem 3.3]{Nacry2025}), it is proved that a nonempty and closed set $A\subset\R^n$ is $r$-strongly convex if and only if for all $a\in\bdry A$, and for {\it all} nonzero $\zeta\in N_A^P(a)$, we have \begin{equation*} \label{strong-convex} \left\<\frac{\zeta}{\|\zeta\|},x-a\right\>\leq -\frac{1}{2r}\|x-a\|^2,\;\forall x\in A\;\;\;\; \left[\hbox{or equivalently}\; A\subset \widebar{B}\left(a-r\frac{\zeta}{\|\zeta\|};r\right)\right],\end{equation*}
where $\widebar{B}(y;\rho)$ denotes the closed ball of radius $\rho$ centered at $y$. For more information about strong convexity and its applications, refer to the introduction of the paper \cite{Nacry2025}, which provides a comprehensive historical overview of this property. We proceed to introduce a new geometric property. For $r>0$, we say that $A$ is {\it $r$-spherically supported} if for all $a\in\bdry A$, there {\it exists} a nonzero $\zeta_a\in N_A^P(a)$, such that \begin{equation*} \label{sphericallysupported} \left\<\frac{\zeta_a}{\|\zeta_a\|},x-a\right\>\leq -\frac{1}{2r}\|x-a\|^2,\;\forall x\in A\;\;\;\; \left[\hbox{or equivalently}\; A\subset \widebar{B}\left(a-r\frac{\zeta_a}{\|\zeta_a\|};r\right)\right].\end{equation*}
Now, building on the study conducted in \cite{JCA2009}, a natural question arises: Is there an equivalence between strong convexity and the spherical support property defined above? If not, under what {\it minimal} conditions can this equivalence be established? One can easily see that the equivalence between strong convexity and the spherical support property does not hold in general. Indeed, for $A$ the unit circle in the plane $\R^2$, we clearly have that $A$ is $1$-spherically supported but it fails to be $r$-strongly convex for any $r>0$. The first main result of this paper addresses this question through the following theorem. We prove that if the {\it interior} of $A$, denoted by $\inte A$, is {\it nonempty}, then the equivalence between strong convexity and the spherical support property holds.
\begin{theorem} \label{th1} Let $A\subset\R^n$ be a nonempty and closed set not reduced to a singleton, and let $r>0$. Then $A$ is $r$-strongly convex if and only if $A$ is $r$-spherically supported with $\inte A\not=\emptyset$.
\end{theorem}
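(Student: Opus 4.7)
The plan is to prove the two implications separately, with the converse being the substantive part.

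For the direction $(\Rightarrow)$, write $A=\bigcap_{i\in I}\widebar{B}(c_i;r)$. Picking two distinct elements $a,b\in A$ (available since $A$ is not a singleton) and applying the median identity to $m:=\tfrac{a+b}{2}$ yields, uniformly in $i\in I$,
\[
\|m-c_i\|^2 \;=\; \tfrac12\|a-c_i\|^2+\tfrac12\|b-c_i\|^2-\tfrac14\|a-b\|^2 \;\le\; r^2-\tfrac14\|a-b\|^2,
\]
so an open ball of positive radius $r-\sqrt{r^2-\|a-b\|^2/4}$ around $m$ sits inside every $\widebar{B}(c_i;r)$, hence inside $A$, proving $\inte A\neq\emptyset$. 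Being strongly convex, $A$ is convex; on $\bdry A$ the proximal normal cone therefore coincides with the usual convex normal cone (by a standard rescaling argument), and $\inte A\neq\emptyset$ guarantees a nonzero supporting normal at every boundary point. Applying \cite[Proposition~3.1]{frankowska} to any such $\zeta$ delivers $A\subset\widebar{B}(a-r\zeta/\|\zeta\|;r)$, which is the $r$-spherical support property.

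For the direction $(\Leftarrow)$, at each $a\in\bdry A$ let $\zeta_a$ and $B_a:=\widebar{B}(a-r\zeta_a/\|\zeta_a\|;r)$ be the data furnished by the spherical support property, and define
\[
C \;:=\; \bigcap_{a\in\bdry A} B_a .
\]
Since each $B_a\supset A$ we have $A\subset C$, and $C$ is $r$-strongly convex by definition (an intersection of closed $r$-balls); the theorem therefore reduces to the equality $A=C$. Suppose for contradiction $x\in C\setminus A$, fix $x_0\in\inte A$, and consider
\[
t^* := \sup\{\,t\in[0,1] : x_0+t(x-x_0)\in A\,\}.
\]
Closedness of $A$ together with $x_0\in\inte A$ and $x\notin A$ forces $t^*\in(0,1)$, and $a^*:=x_0+t^*(x-x_0)\in\bdry A$. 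Applying the spherical support at $a^*$ gives a closed $r$-ball $B_{a^*}$ with $A\subset B_{a^*}$ and $a^*\in\bdry B_{a^*}$, while $x\in C\subset B_{a^*}$. Because $\inte A$ is open and contained in $B_{a^*}$, we have $\inte A\subset\inte B_{a^*}$, in particular $x_0\in\inte B_{a^*}$. But then convexity of $B_{a^*}$ together with $x_0\in\inte B_{a^*}$ and $x\in B_{a^*}$ implies $[x_0,x)\subset\inte B_{a^*}$; since $t^*\in(0,1)$ places $a^*$ in $[x_0,x)$, this contradicts $a^*\in\bdry B_{a^*}$.

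The main obstacle is precisely this final step, whose validity rests on the nonemptiness of $\inte A$: without an interior anchor $x_0$ the opening move of the contradiction is unavailable, which is fully consistent with the unit-circle counterexample recalled in the introduction. Once $A=C$ is established, $A$ is realized as an intersection of closed $r$-balls and is thus $r$-strongly convex by definition.
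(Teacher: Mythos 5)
Your proof is correct, and the converse direction takes a genuinely different --- and considerably more elementary --- route than the paper. The paper first shows $A$ is epi-Lipschitz (a contradiction argument using a ball in $\inte A$), then local convexity via the prox-regularity of epigraphs of Lipschitz functions satisfying the exterior sphere condition together with Federer's theorem, then passes to connected components, invokes Tietze--Nakajima to get convexity of each component, and finally uses Schneider's theorem on regular supporting halfspaces to identify each component with nonempty interior as the intersection of its supporting $r$-balls and to rule out a second component. You instead go straight for the identity $A=\bigcap_{a\in\bdry A}B_a$: the last-exit point $a^*$ of the segment $[x_0,x]$ lies on $\bdry A$, hence on the sphere $\bdry B_{a^*}$, while the standard fact that $[x_0,x)\subset\inte B_{a^*}$ (since $x_0\in\inte A\subset\inte B_{a^*}$ and $x\in C\subset B_{a^*}$) puts $a^*$ in $\inte B_{a^*}$ --- a clean contradiction. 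What makes your shortcut available here, and not in the prox-regularity/exterior-sphere setting of \cite{JCA2009} that the paper's proof is modelled on, is that the spherical support property is a \emph{global} containment $A\subset B_a$, so there is a canonical intersection of $r$-balls to compare $A$ with; the paper's heavier route yields epi-Lipschitzness and local convexity as by-products but is not needed for the theorem itself. Your forward direction (midpoint via the median identity, then the supporting-hyperplane theorem plus \cite[Proposition 3.1]{frankowska}) also checks out and is slightly more detailed than the paper's one-line assertion. One shared caveat: both your argument and the paper's tacitly assume $\bdry A\neq\emptyset$, i.e.\ $A\neq\R^n$ (otherwise the intersection defining $C$ is empty and the statement degenerates); this costs nothing but is worth a word.
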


As a corollary, we derive the following result, demonstrating that, similar to the equivalence between prox-regularity and the exterior sphere condition established in \cite{JCA2009},  the epi-Lipschitzness of $A$ is sufficient for the equivalence between strong convexity and the spherical support property. Note that the equivalence between prox-regularity and the exterior sphere condition does {\it not} hold if $\inte A\not=\emptyset$ as can be easily seen in  \cite[Example 2.5]{JCA2009}.

\begin{corollary} \label{coro0} Let $A\subset\R^n$ be a nonempty and closed set not reduced to a singleton, and let $r>0$. Then $A$ is $r$-strongly convex if and only if $A$ is epi-Lipschitz  and $r$-spherically supported.
\end{corollary}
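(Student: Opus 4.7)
The plan is to deduce Corollary \ref{coro0} directly from Theorem \ref{th1} by exploiting the relationship, recalled in the introduction, between the epi-Lipschitz property and the nonemptiness of the interior (especially in the convex setting).

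For the forward implication, I would start by noting that if $A$ is $r$-strongly convex, then by definition $A$ is an intersection of closed balls of radius $r$, hence convex. Applying Theorem \ref{th1}, I obtain that $A$ is $r$-spherically supported and $\inte A\not=\emptyset$. Since $A$ is convex with nonempty interior, the characterization mentioned right after the definition of the epi-Lipschitz property in the introduction (namely, that a convex set is epi-Lipschitz if and only if it has nonempty interior) yields that $A$ is epi-Lipschitz.

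For the reverse implication, assume $A$ is epi-Lipschitz and $r$-spherically supported. The introduction also recalls that every epi-Lipschitz set equals the closure of its interior; since $A$ is nonempty and not a singleton, it follows that $\inte A\not=\emptyset$. Then Theorem \ref{th1} applies and gives at once that $A$ is $r$-strongly convex.

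The main (in fact only) point to keep in mind is that the whole proof reduces to the substitution \emph{``epi-Lipschitz''} $\leftrightarrow$ \emph{``nonempty interior''}, which is legitimate here because the $r$-strong convexity side of the equivalence forces convexity of $A$ and hence allows us to use the convex characterization of the epi-Lipschitz property. No additional obstacle is expected; the corollary is essentially a cosmetic restatement of Theorem \ref{th1}.
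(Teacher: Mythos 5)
Your proposal is correct and follows essentially the same route as the paper: forward direction via Theorem \ref{th1} plus the fact that a convex set is epi-Lipschitz iff it has nonempty interior, and reverse direction via $A=\clo(\inte A)$ to get $\inte A\not=\emptyset$ before invoking Theorem \ref{th1}. (A minor remark: in the reverse direction only the nonemptiness of $A$ is needed to conclude $\inte A\not=\emptyset$ from $A=\clo(\inte A)$; the ``not a singleton'' hypothesis is what licenses the application of Theorem \ref{th1} itself.)
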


Another important corollary is the following, where we prove that the epi-Lipschitzness condition of $A$ in Corollary \ref{coro0} can be replaced by its convexity. Notably, this corollary is discussed in \cite{weber} (refer to condition $(\hbox{ii}')$ following the proof of \cite[Theorem 2.1]{weber}). Our proofs of Theorem \ref{th1}, Corollary \ref{coro0}, and Corollary \ref{coro1} build upon nonsmooth analysis techniques, some of which were introduced in \cite{JCA2009}, combined with carefully chosen tools from convex analysis. These employed nonsmooth analysis tools play a crucial role in the proof of Theorem \ref{th1}, given that the set  $A$ is {\it only} assumed  to have a nonempty interior. Consequently, our approach differs from the techniques employed in \cite{weber}. Moreover, we are unable to discern how Theorem \ref{th1} can be derived from the results presented in \cite{weber}. 

\begin{corollary} \label{coro1} Let $A\subset\R^n$ be a nonempty and closed set, and let $r>0$. Then $A$ is $r$-strongly convex if and only if $A$ is convex and $r$-spherically supported. 
\end{corollary}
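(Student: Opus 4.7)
The plan is to reduce the nontrivial direction to Theorem \ref{th1}. For the forward direction, if $A$ is $r$-strongly convex then it is an intersection of closed $r$-balls, hence convex; and the Frankowska--Olech characterization recalled in the introduction says that every nonzero proximal normal at a boundary point is realized by a support $r$-ball, so in particular such a $\zeta_a$ exists at each $a\in\bdry A$, giving the $r$-spherical support property.

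For the converse, I would split into three cases. If $A=\{a\}$ is a singleton, then $A$ is trivially $r$-strongly convex: it equals the intersection of all closed $r$-balls whose bounding sphere passes through $a$, since for any $x\ne a$ the center $y:=a-r(x-a)/\|x-a\|$ satisfies $\|y-a\|=r$ and $\|x-y\|=\|x-a\|+r>r$. If $A$ is not a singleton and $\inte A\ne\emptyset$, then Theorem \ref{th1} applies directly and yields $r$-strong convexity -- convexity of $A$ is not even used in this sub-case. What remains is to rule out the degenerate case in which $A$ is convex, not a singleton, and has empty interior.

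To handle this last case, let $L=\aff A$, which is a proper affine subspace of $\R^n$, and pick $a\in\ri A$, which exists because $A$ is a nonempty convex set. Since $\inte A=\emptyset$, every point of $A$ lies in $\bdry A$, so the spherical support property supplies a nonzero $\zeta_a\in N_A^P(a)$ with
\[
\left\langle \frac{\zeta_a}{\|\zeta_a\|},\,x-a\right\rangle \;\le\; -\frac{1}{2r}\|x-a\|^2 \qquad \text{for all } x\in A.
\]
For each $v\in L-a$, the relative interior assumption gives $a+\delta v\in A$ for all sufficiently small $\delta>0$; substituting this into the inequality, dividing by $\delta>0$, and letting $\delta\to 0^+$ yields $\langle \zeta_a/\|\zeta_a\|,v\rangle\le 0$; applying the same argument to $-v$ forces $\zeta_a\perp (L-a)$. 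Consequently, for any $a'\in A\subset L$ the inequality degenerates to $0\le -\tfrac{1}{2r}\|a'-a\|^2$, so $a'=a$ and $A=\{a\}$, contradicting the hypothesis.

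I expect this last sub-case -- deducing the nonemptiness of $\inte A$ from convexity and spherical support -- to be the main obstacle; the perpendicularity argument is what exploits the tension between the relative interior directions (which provide test vectors) and the strict quadratic decay imposed by the spherical support inequality. Once this degenerate case is ruled out, Theorem \ref{th1} closes the proof.
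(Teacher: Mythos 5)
Your proof is correct and follows the same strategy as the paper's: both directions reduce to Theorem \ref{th1}, and the only substantive work in the converse is ruling out $\inte A=\emptyset$ when $A$ is convex and not a singleton. The paper does this step more directly --- it takes two distinct points $a,a'\in A$, notes that their midpoint $c$ lies in $\bdry A=A$, and combines the spherical-support inequality at $c$ tested against $a$ and against $a'$ to obtain $0<\frac{1}{4r}\|a-a'\|^2\le\langle\zeta_c,a-a'\rangle\le-\frac{1}{4r}\|a-a'\|^2$ --- whereas you route through the relative interior, the affine hull, and a perpendicularity argument; both exploit the same two-sided tension with the quadratic decay, and your version (including the explicit verification of the singleton case, which the paper leaves to the reader) is equally valid.
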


In \cite{JCA2018b}, Nour and Takche introduced, for nonempty and closed subsets of $\R^n$, a new regularity class called {\it $S$-convexity}. A nonempty and closed set $A\subset \R^n$ is said to be $S$-convex, for $S\supset A$, if no point in $S$ is the endpoint of  two normal segments to $A$ at two different boundary points of $A$, with both segments contained in $S$. By a normal segment to $A$ at $a\in\bdry A$, we mean the segment $[a,a+t\zeta]$ where $\zeta\in N_A^P(a)$ is unit and $t\geq0$. The authors proved in \cite{JCA2018b} that this new regularity class encompasses, for a suitable choice of $S$, several well-known regularity properties, including prox-regularity, the exterior sphere condition, and the union of closed balls property. For instance, they proved in \cite[Theorem 3.7]{JCA2018b} that $A$ is $r$-prox-regular if and only if $A$ is $\O_r(A)$-convex, where $\O_r(A)$ is the tube $\O_r(A):=\{x\in\R^n : d_A(x)< r\}$, and $d_A(\cdot)$ denotes the distance function to $A$. In order to extend this result to strong convexity, we introduce the following property. For $S\subset\R^n$, a nonempty and closed set $A\subset \R^n$ is said to be {\it $r$-negatively $S$-convex}, if no point in $S$ is the endpoint of two normal segments to $A$ at two different boundary points of $A$, where the lengths of both segments are greater than  $r$. By a negative normal segment to $A$ at $a\in\bdry A$, we mean the segment $[a,a-t\zeta]$ where $\zeta\in N_A^P(a)$ is unit and $t\geq 0$. So, we have the following theorem, which constitutes the second main result of this paper. For $x\in\R^n$ and $A\subset\R^n$ a nonempty and closed set, $\df_A(x):=\sup_{a\in A}\|x-a\|$ denotes the {\it farthest distance} from $x$ to $A$.

\begin{theorem}\label{th2} Let $A\subset\R^n$ be a nonempty and closed set, and let $r>0$. Then $A$ is $r$-strongly convex if and only if $A$ is convex, bounded, and $r$-negatively $\E_r(A)$-convex, where \begin{equation}\label{Er(A)} \E_r(A):=\{x\in\R^n : \df_A(x)>r\}.\end{equation}
\end{theorem}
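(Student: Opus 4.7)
The plan is to handle the two directions separately, invoking Corollary \ref{coro1} to streamline the reverse implication. The forward direction is essentially an algebraic computation stemming from the analytic characterization of strong convexity recalled earlier; the reverse direction is where the work lies and proceeds by contradiction through a perturbation argument using the farthest point of $A$.

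\textbf{Forward direction.} Assume $A$ is $r$-strongly convex. Then $A$ is the intersection of a family of closed balls of radius $r$, so it is automatically convex and bounded (each defining ball has diameter $2r$). For the $r$-negative $\E_r(A)$-convexity I argue by contradiction: suppose there exist distinct $a_1,a_2\in\bdry A$, unit $\zeta_i\in N_A^P(a_i)$, and $t_1,t_2>r$ with a common endpoint $x=a_1-t_1\zeta_1=a_2-t_2\zeta_2$. Applying the strong convexity inequality at each $a_i$ with test point $a_j$ ($j\neq i$) and summing yields $\|a_1-a_2\|^2\leq r\<\zeta_1-\zeta_2,a_1-a_2\>$. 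Substituting $a_1-a_2=t_1\zeta_1-t_2\zeta_2$ and writing $c:=\<\zeta_1,\zeta_2\>$, this rearranges to $t_1^2+t_2^2-r(t_1+t_2)\leq c\,[2t_1t_2-r(t_1+t_2)]$. Since $t_1,t_2>r$ forces $2t_1t_2-r(t_1+t_2)>0$, dividing gives $c\geq1$, with equality only when $t_1=t_2$ (the numerator-minus-denominator being $(t_1-t_2)^2$). Hence $\zeta_1=\zeta_2$ and $t_1=t_2$, which in turn gives $a_1=a_2$, contradicting $a_1\neq a_2$. (Note that the hypothesis $x\in\E_r(A)$ is not actually used here.)

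\textbf{Reverse direction.} By Corollary \ref{coro1}, since $A$ is convex it suffices to show that $A$ is $r$-spherically supported. Fix $a\in\bdry A$ and, using convexity, pick any unit $\zeta\in N_A^P(a)$; the claim is that $A\subset\widebar{B}(a-r\zeta;r)$, equivalently $\df_A(a-r\zeta)\leq r$. Suppose for contradiction $\df_A(a-r\zeta)=r+\delta$ with $\delta>0$. Using $1$-Lipschitz continuity of $\df_A$, for any $\epsilon\in(0,\delta/2)$ the point $x_\epsilon:=a-(r+\epsilon)\zeta$ satisfies $\df_A(x_\epsilon)>r+\epsilon=\|a-x_\epsilon\|$, so $x_\epsilon\in\E_r(A)$. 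Compactness of $A$ yields a farthest point $\bar{a}_\epsilon\in A$ from $x_\epsilon$, and the strict inequality above forces $\bar{a}_\epsilon\neq a$. The classical observation that, in a convex set, $\bar{a}_\epsilon-x_\epsilon\in N_A^P(\bar{a}_\epsilon)$ (proved by differentiating $s\mapsto\|(1-s)\bar{a}_\epsilon+sa'-x_\epsilon\|^2$ at $s=0^+$ for $a'\in A$) supplies a unit normal $\zeta':=(\bar{a}_\epsilon-x_\epsilon)/\|\bar{a}_\epsilon-x_\epsilon\|\in N_A^P(\bar{a}_\epsilon)$ with $x_\epsilon=\bar{a}_\epsilon-t'\zeta'$ for $t':=\df_A(x_\epsilon)>r$. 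Therefore $x_\epsilon\in\E_r(A)$ is the endpoint of two negative normal segments of length exceeding $r$, emanating from the distinct boundary points $a$ and $\bar{a}_\epsilon$, violating the $r$-negative $\E_r(A)$-convexity.

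\textbf{Main obstacle.} The most delicate step is the perturbation in the reverse direction: $\epsilon$ must be calibrated (via the $1$-Lipschitz estimate for $\df_A$) to guarantee simultaneously that $x_\epsilon\in\E_r(A)$ and that the farthest point $\bar{a}_\epsilon$ is provably distinct from $a$. Once this is in place, the argument rests on two standard ingredients — the farthest-point identity producing an element of the proximal normal cone of a convex set, and the invocation of Corollary \ref{coro1} to convert $r$-spherical support into full $r$-strong convexity.
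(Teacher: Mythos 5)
Your proof is correct, but both halves take a genuinely different route from the paper's. For the forward direction the paper avoids your quadratic computation entirely: from $A\subset\widebar{B}(a-r\zeta_a;r)$ it reads off $\df_A(a-r\zeta_a)=r$ and $a\in\far_A(a-r\zeta_a)$, and then uses the standard fact recorded in \eqref{farnormal} that the farthest point is unique at every point further out along the ray, so $\far_A(x)=\{a\}=\{a'\}$ immediately. Your algebraic argument (summing the two strong-convexity inequalities, substituting $a_1-a_2=t_1\zeta_1-t_2\zeta_2$, and squeezing $c=\<\zeta_1,\zeta_2\>$ between the ratio and $1$) is a valid, self-contained alternative, and your remark that the hypothesis $x\in\E_r(A)$ is never used is accurate — the paper's version does not use it either. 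For the reverse direction the paper finishes in a few lines by invoking characterization $(v)$ of Proposition \ref{prop0}: it suffices that $\far_A(x)$ be a singleton for every $x\in\E_r(A)$, and two distinct farthest points $a,a'$ at such an $x$ instantly yield two negative normal segments of length $\df_A(x)>r$ meeting at $x$, with no perturbation required. Your route through Corollary \ref{coro1} and the $r$-spherical support property is also sound — the calibration $\epsilon<\delta/2$ together with the $1$-Lipschitz bound on $\df_A$ does exactly what you need, and $\bar{a}_\epsilon-x_\epsilon\in N_A^P(\bar{a}_\epsilon)$ follows from \eqref{farnormal} even without convexity — but it buys you nothing over Proposition \ref{prop0}$(v)$ and costs you the perturbation argument; its only advantage is that it leans on Corollary \ref{coro1} rather than on the farthest-point characterization of strong convexity.
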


The layout of the paper is as follows. In the next section, we introduce our notations and fundamental definitions, along with some key results from nonsmooth and convex analysis. Section \ref{mainproofs} is dedicated to the proofs of our main results.

\section{Preliminaries} We begin by introducing the fundamental notations and definitions that will be employed throughout the paper. 
\begin{itemize}
\item We denote by $\|\cdot\|$, $\<,\>$, $B$ and $\widebar{B}$, the Euclidean norm, the usual inner product, the open unit ball and the closed unit ball, respectively. For $r>0$ and $x\in\R^n$, we set $B(x;\rho):= x + \rho B$ and $\widebar{B}(x;\rho):= x + \rho \widebar{B}$.
\item For a set $A\subset\R^n$, $A^c$, $\inte A$, $\bdry A$ and $\clo A$ are the complement (with respect to $\R^n$), the interior, the boundary and the closure of $A$, respectively. 
\item The closed segment (resp. open segment)  joining two points $x$ and $y$ in $\R^n$ is denoted by $[x,y]$ (resp. $]x,y[$).
\item For $\Omega\subset\R^n$ open and $f\colon\Omega\f\R\cup\{-\infty,+\infty\}$ an extended real-valued function, we denote by $\epi f$ the epigraph of $f$.
\item The distance from a point $x$ to a nonempty and closed set $A\subset\R^n$ is denoted by $d_A(x):=\inf_{a\in A}\|x-a\|$. We also denote by $\proj_A(x)$ the set of closest points in $A$ to $x$, that is, the set of points $a$ in $A$ satisfying $d_A(x)=\|a-x\|$.
\item The farthest distance from a point $x$ to a nonempty and closed set $A\subset\R^n$ is denoted by $\df_A(x):=\sup_{a\in A}\|x-a\|$. We also denote by $\far_A(x)$ the set of farthest points in $A$ to $x$, that is, the set of points $a$ in $A$ satisfying $\df_A(x)=\|a-x\|$. 
\end{itemize}

We proceed to present some definitions and results from nonsmooth analysis, with the monographs \cite{clsw,mord,penot,rockwet,thibault} serving as our general references for these concepts. For $A\subset\R^n$ a nonempty and closed set, and for $a\in A$, we define the {\it proximal normal cone} to $A$ at $a$, denoted by $N_A^P(a)$, as $$N_A^P(a):=\{\zeta\in\R^n : \exists \sigma\geq 0,\,\<\zeta,x-a\>\leq \sigma\|x-a\|^2,\;\forall x\in A\}.$$
When $A$ is convex, the proximal normal cone $N_A^P(\cdot)$ is known to coincide with the {\it standard} normal cone in the sense of convex analysis, denoted by  $N_A(\cdot)$, and defined for $a\in A$ as $$N_A(a):=\{\zeta\in\R^n : \<\zeta,x-a\>\leq 0,\;\forall x\in A\}.$$ In this convexity case, the normal cone $N_A(a)\not=\{0\}$ for all $a\in\bdry A$. 

The following assertions are true for any nonempty and closet set $A\subset\R^n$:
\begin{itemize}
\item For all $a\in A$ and $x\in\R^n$, $$a\in\proj_A(x)\implies \begin{cases}a\in\bdry A,\\[2pt]\<x-a,y-a\>\leq \frac{1}{2}\|y-a\|^2,\;\forall y\in A,\\[2pt]  x-a\in N_A^P(a),\;\hbox{and}\,\;\proj_A(a+t(x-a))=\{a\},\;\forall t\in[0,1[.\end{cases}$$
\item For all $a\in A$ and $x\in\R^n$, \begin{equation} \label{farnormal} a\in\far_A(x)\implies  \begin{cases}a\in\bdry A,\\[2pt] \<a-x,y-a\>\leq -\frac{1}{2}\|y-a\|^2,\;\forall y\in A,\\[2pt] a-x\in N_A^P(a),\;\hbox{and}\,\;\far_A(x+t(x-a))=\{a\},\;\forall t>0.\end{cases}\end{equation}
\end{itemize}

In the following lemma, which corresponds to \cite[Exercise 3.6.5]{clsw}, we present an important characterization of the epi-Lipschitz property.

\begin{lemma}[{\bbbp\cite[Exercise 3.6.5]{clsw}}]  \label{epilemma} Let $A\subset\R^n$ a nonempty and closed set. Then, $A$ is epi-Lipschitz if and only if for all $a\in\bdry A$ there exist $v\in\R^n$ and $\e>0$ such that $$a'+tw\in A,\; \forall a'\in A\cap B(a,\e),\;\forall t\in[0,\e[,\;\forall w\in B(v;\e).$$
\end{lemma}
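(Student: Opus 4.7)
The plan is to prove the two implications separately, leveraging the standard correspondence between local epigraphic representations and uniform tangent-cone inclusions.

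\textbf{Direction $(\Rightarrow)$.} I would unpack the epi-Lipschitz definition at $a\in\bdry A$ to obtain an orthogonal matrix $U$, a neighborhood $V$ of $a$, and a Lipschitz function $f$ of constant $K$ with $U(A\cap V)=\epi f\cap U(V)$. Setting $v:=U^{-1}e_n$ (the preimage of the epigraph's upward direction, with $e_n=(0,\ldots,0,1)$), and shrinking $\e$ so that all displacements $a'+tw$ with $a'\in A\cap B(a,\e)$, $t\in[0,\e[$, $w\in B(v,\e)$ remain inside $V$ and moreover $\e<1/(K+1)$, I would verify the desired inclusion directly: writing $Uw=(w'',w_n)$ with $\|w''\|<\e$ and $w_n>1-\e$, and $Ua'=(x',y)$ with $y\geq f(x')$, the Lipschitz estimate gives
\[
f(x'+tw'')\leq f(x')+Kt\|w''\|\leq y+Kt\e<y+t(1-\e)<y+tw_n,
\]
hence $a'+tw\in A$.

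\textbf{Direction $(\Leftarrow)$.} Fix $a\in\bdry A$ together with the associated $v,\e$. The first step is to note that $v\neq 0$: otherwise, taking $a'=a$ in the hypothesis yields $a+\bigcup_{0\leq t<\e}tB(0;\e)\subset A$, a neighborhood of $a$, contradicting $a\in\bdry A$. Using invariance of the epi-Lipschitz property under orthogonal change of coordinates, I would rotate so that $v=\|v\|e_n$ and write $a=(a',a_n)\in\R^{n-1}\times\R$. The core construction is then
\[
f(x'):=\inf\bigl\{y\in\R : (x',y)\in A\cap\widebar{B}(a;\e/2)\bigr\},
\]
defined for $x'$ in a sufficiently small ball around $a'$. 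I would then establish (i) that $f$ is finite on such a ball (the cone of admissible moves forces the vertical strip over each nearby $x'$ to meet $A$), (ii) that $f$ is Lipschitz (for $(x',y)\in A$ near $a$, an arbitrary small horizontal displacement $h$ can be realized by $tw$ with $w=v+u$, $\|u\|<\e$, $tu'=h$, which pins the vertical component down to a multiple of $\|h\|$ depending only on $\|v\|$ and $\e$), and (iii) that locally $A$ coincides with $\epi f$ (the inclusion "$\supset$" comes from vertical upward mobility using $w=v$ combined with closedness of $A$, while "$\subset$" is immediate from the definition of the infimum).

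\textbf{Main obstacle.} The delicate part is the localization bookkeeping in direction $(\Leftarrow)$. One must coordinate the radius of the horizontal neighborhood of $a'$, the localizing ball $\widebar{B}(a;\e/2)$ used in the infimum, and the cone of admissible moves so that every auxiliary point produced during the Lipschitz and epigraph arguments still lies in $B(a,\e)$, where the hypothesis actually grants information. Concretely, choosing the horizontal radius proportional to $\e^2/\|v\|$ keeps the required values of $t$ below $\e$, but tracking these thresholds through each step (finiteness, Lipschitz bound, and epigraph identification) is the principal bookkeeping challenge.
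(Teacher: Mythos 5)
The paper offers no proof of this lemma to compare against: it is quoted directly from \cite[Exercise 3.6.5]{clsw}, so your proposal supplies an argument the paper delegates to the literature. What you propose is the standard proof that the stated condition is precisely hypertangency of $v$ to $A$ at $a$, and it is sound. Your forward direction is essentially complete as written: with $v=U^{-1}e_n$, orthogonality gives $\|w''\|<\e$ and $w_n>1-\e$ for $w\in B(v;\e)$, and your requirement $\e<1/(K+1)$ yields $K\e<1-\e<w_n$, which closes the estimate; only the trivial case $t=0$ needs separate mention. The reverse direction's construction of $f$ as a localized infimum is the right one, and the infimum is attained since $A\cap\widebar{B}(a;\e/2)$ is compact.

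Two refinements are needed to make the sketch airtight. First, strengthen your observation $v\neq 0$ to $0\notin B(v;\e)$, i.e.\ $\|v\|\geq\e$: the same argument (take $a'=a$ and let $w$ range over a ball around the origin) shows that otherwise $a\in\inte A$. This lower bound on $\|v\|$ is what actually makes your scaling of the horizontal radius by $\e^2/\|v\|$ keep the required parameters $t\approx\|h\|/\e$ below $\e$; with only $v\neq0$ the bookkeeping you flag does not close. Second, the inclusion $\supset$ in the epigraph identification does not follow from a \emph{single} application of upward mobility: one move with $w=v$ gains height only $t\|v\|<\e\|v\|$, whereas the gap $y-f(x')$ can be of order $\e/2$ (the infimum may be attained well below $a$), so for small $\|v\|$ one climb is insufficient. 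The fix is a short chaining argument: the vertical segment from $(x',f(x'))$ to $(x',y)$ lies in $B(a;\e)$ by convexity of the ball, so the set of heights $s\in[0,\,y-f(x')]$ with $(x',f(x')+s)\in A$ is closed, contains $0$, and together with each of its points $s$ contains $[s,\,s+\e\|v\|)\cap[0,\,y-f(x')]$; hence it is the whole interval. With these two points added, the remaining work is exactly the localization bookkeeping you identify, and it resolves along the lines you anticipate.
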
 

The following lemma, which aligns with \cite[Lemma 3.8]{JCA2009} and was pivotal in establishing the equivalence between prox-regularity and the exterior sphere condition when $A$ is epi-Lipschitz, will also be instrumental in the proof of Theorem \ref{th1}.

\begin{lemma}[{\bbbp\cite[Lemma 3.8]{JCA2009}}] \label{lem1} Let $f\colon U\f\R$ be a $K$-Lipschitz function defined on an open, convex and bounded set $U\subset \R^n$, and let $r>0$. Assume that $\epi f$ satisfies the exterior $r$-sphere condition. Then $\epi f$ is $\frac{r}{(1+K^2)^{\frac{3}{2}}}$-prox-regular.
\end{lemma}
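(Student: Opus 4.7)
The plan is to convert the one-direction information given by the exterior $r$-sphere condition into a uniform quadratic lower bound on $f$ valid for \emph{every} $p \in \partial^P f(a)$, and then translate this bound back into the sphere inequality defining $r'$-prox-regularity of $\epi f$. The bridge between the "there exists $p_a$" at each point and the "for all $p$" conclusion will be an auxiliary convex function.

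First I would set up the correspondence between proximal normals of $\epi f$ and proximal subgradients of $f$: since $f$ is $K$-Lipschitz, every nonzero $\zeta \in N_{\epi f}^P(a, f(a))$ has the form $\lambda(p, -1)$ with $\lambda > 0$, $p \in \partial^P f(a)$, and $\|p\| \leq K$. Applying the exterior $r$-sphere inequality to the direction $(p_a, -1)/\sqrt{1+\|p_a\|^2}$ supplied by the hypothesis, evaluated at graph points $(x, f(x))$ and bounded via $|f(x) - f(a)| \leq K\|x - a\|$ and $\|p_a\| \leq K$, yields the one-direction quadratic bound
\begin{equation*}
f(x) \geq f(a) + \<p_a, x - a\> - C \|x - a\|^2, \quad \forall\, x \in U, \qquad C := \frac{(1 + K^2)^{3/2}}{2r}.
\end{equation*}

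Next I would upgrade this "one $p_a$ at each $a$" statement to a bound along every subgradient. Set $g \colon U \f \R$, $g(x) := f(x) + C \|x\|^2$. Using $\|x\|^2 - \|a\|^2 = \|x - a\|^2 + 2 \<a, x - a\>$, the previous inequality rearranges to
\begin{equation*}
g(x) \geq g(a) + \<p_a + 2Ca, x - a\>, \quad \forall\, x \in U,
\end{equation*}
so $g$ admits an affine minorant at every point of the open convex set $U$; a standard averaging argument at $c := ta + (1-t)b$ then forces $g$ to be convex on $U$. Since $g$ is convex, $\partial^P g(a) \subseteq \partial g(a)$ (convex subdifferential), and a direct computation gives $\partial^P g(a) = \partial^P f(a) + 2Ca$. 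Hence every $p \in \partial^P f(a)$ produces $p + 2Ca \in \partial g(a)$, whose convex subgradient inequality rearranges to
\begin{equation*}
f(x) \geq f(a) + \<p, x - a\> - C \|x - a\|^2, \quad \forall\, x \in U, \;\; \forall\, p \in \partial^P f(a).
\end{equation*}

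Finally, I would translate this back to $\epi f$: for an arbitrary nonzero $\eta = \lambda(p, -1) \in N_{\epi f}^P(a, f(a))$ and any $(x, y) \in \epi f$, combining $y \geq f(x)$ with $\|(x, y) - (a, f(a))\|^2 \geq \|x - a\|^2$ yields
\begin{equation*}
\frac{\<\eta, (x, y) - (a, f(a))\>}{\|\eta\|} \leq \frac{C\|x - a\|^2}{\sqrt{1 + \|p\|^2}} \leq C \|(x, y) - (a, f(a))\|^2 = \frac{1}{2 r'} \|(x, y) - (a, f(a))\|^2,
\end{equation*}
with $r' = 1/(2C) = r/(1 + K^2)^{3/2}$, which is precisely the $r'$-sphere condition for $\eta$. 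The main obstacle is the convexity upgrade in the middle step: it is the device that promotes the existence of a single exterior-sphere direction $p_a$ at each point into a uniform quadratic lower bound along every proximal subgradient. The convexity of $U$ is essential here, since pointwise one-sided supports only combine into a genuine convex minorant on a convex domain; the rest is bookkeeping.
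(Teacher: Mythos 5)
The paper does not prove this lemma; it is quoted verbatim from \cite[Lemma 3.8]{JCA2009}, so there is no in-paper argument to compare against. Your reconstruction is correct and follows what is essentially the standard (and, to my knowledge, the cited reference's) route: pass from proximal normals of $\epi f$ to proximal subgradients of $f$ via the Lipschitz correspondence, extract the uniform lower quadratic support $f(x)\geq f(a)+\langle p_a,x-a\rangle-C\|x-a\|^2$ with $C=(1+K^2)^{3/2}/(2r)$, observe that $g:=f+C\|\cdot\|^2$ then has an affine minorant at every point of the convex set $U$ and is therefore convex (equivalently, $f$ is lower-$C^2$), which upgrades the bound from the single supplied $p_a$ to every $p\in\partial^P f(a)$, and finally translate back to the $\frac{1}{2r'}$-inequality for all proximal normals of $\epi f$ with $r'=r/(1+K^2)^{3/2}$. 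All the individual steps (no horizontal proximal normals for Lipschitz $f$, the sum rule $\partial^P g(a)=\partial^P f(a)+2Ca$, and the constant bookkeeping) check out.
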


We now present some useful characterizations of $r$-strongly convex sets, as defined earlier in the introduction. The proofs, along with other characterizations, can be found in the papers \cite{balashov,frankowska,Goncharov,Nacry2025}.

\begin{proposition}\label{prop0} Let $A\subset\R^n$ be a nonempty and closed set, and let $r>0$. The following assertions are equivalent\sp$:$
\begin{enumerate}[$(i)$]
\item $A$ is $r$-strongly convex.
\item For all $a\in\bdry A$, and for all nonzero $\zeta\in N_A^P(a)$, we have $$ \left\<\frac{\zeta}{\|\zeta\|},x-a\right\>\leq -\frac{1}{2r}\|x-a\|^2,\;\forall x\in A.$$
\item For all $a\in\bdry A$, and for all nonzero $\zeta\in N_A^P(a)$, we have $$A\subset \widebar{B}\left(a-r\frac{\zeta}{\|\zeta\|};r\right).$$
\item For all $a\in\bdry A$, and for all nonzero $\zeta\in N_A^P(a)$, we have $$a\in \far_A\left(a-r\frac{\zeta}{\|\zeta\|}\right). $$
\item $A$ is convex and $\far_A(x)$ is a singleton for all $x\in\E_r(A)$, where $\E_r(A)$ is the set defined in \eqref{Er(A)}.
\end{enumerate}
\end{proposition}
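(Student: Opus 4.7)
The plan is to proceed through the chain of equivalences by handling the algebraic rephrasings first, then attacking the substantive implications $(i)\Leftrightarrow(iii)$ and $(i)\Leftrightarrow(v)$, drawing on the cited references for the most technical part.

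First, I would observe that $(ii)\Leftrightarrow(iii)\Leftrightarrow(iv)$ are immediate. Expanding
$$\left\|x-\left(a-r\frac{\zeta}{\|\zeta\|}\right)\right\|^2 = \|x-a\|^2 + 2r\left\langle \frac{\zeta}{\|\zeta\|},x-a\right\rangle + r^2,$$
one sees that $(ii)$ holds for a given pair $(a,\zeta)$ exactly when the closed $r$-ball around $a-r\zeta/\|\zeta\|$ contains $A$, which is $(iii)$. Since $\|a-(a-r\zeta/\|\zeta\|)\|=r$, the latter is in turn equivalent to $a$ being a farthest point of $A$ from $a-r\zeta/\|\zeta\|$, which is $(iv)$.

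Next, I would establish $(iii)\Rightarrow(i)$ via a projection argument. Let $C$ be the intersection of the closed balls $\widebar{B}(a-r\zeta/\|\zeta\|;r)$ taken over all $a\in\bdry A$ and all nonzero $\zeta\in N_A^P(a)$; condition $(iii)$ immediately gives $A\subset C$. Conversely, if $x\notin A$, pick $a^*\in\proj_A(x)$, so that $a^*\in\bdry A$ and $\zeta^*:=x-a^*$ is a nonzero element of $N_A^P(a^*)$. A direct computation gives $\|x-(a^*-r\zeta^*/\|\zeta^*\|)\|=\|\zeta^*\|+r>r$, so $x$ lies outside the corresponding ball and hence outside $C$. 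This yields $A=C$, i.e.\ $A$ is an intersection of closed $r$-balls.

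The reverse implication $(i)\Rightarrow(iii)$ is the main technical step and the principal obstacle. Writing $A=\bigcap_{i\in I}\widebar{B}(c_i;r)$, the set $A$ is convex so $N_A^P(a)=N_A(a)$, and for each active index $i\in I_a:=\{i\in I:\|a-c_i\|=r\}$ the unit direction $(a-c_i)/r$ satisfies $(iii)$ trivially; the difficulty is to pass from these active directions to an \emph{arbitrary} $\zeta\in N_A(a)$, since the condition in $(iii)$ is not linear in $\zeta$. Here one invokes the support-function/duality argument of Frankowska--Olech, formalized in \cite{frankowska,Goncharov,Nacry2025}. Finally, for $(i)\Leftrightarrow(v)$: the forward direction follows by fixing any $a_1\in\far_A(x)$ with $\|x-a_1\|>r$, applying $(iii)$ with $\zeta=a_1-x\in N_A^P(a_1)$ (using \eqref{farnormal}), and noting that any second farthest point $a_2$ must coincide with $a_1$ because the two spheres $\bdry\widebar{B}(c;r)$, with $c=a_1+r(x-a_1)/\|x-a_1\|$, and $\bdry\widebar{B}(x;\|x-a_1\|)$ are internally tangent precisely at $a_1$. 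The backward direction, reconstructing $A$ from the uniqueness of farthest points on $\E_r(A)$, proceeds as in \cite{balashov,Nacry2025}.
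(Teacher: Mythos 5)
The paper does not actually prove Proposition \ref{prop0}: it states explicitly that the proofs ``can be found in the papers \cite{balashov,frankowska,Goncharov,Nacry2025}'', so there is no in-paper argument to match yours against. Measured against that, your proposal supplies strictly more: the cycle $(ii)\Leftrightarrow(iii)\Leftrightarrow(iv)$ via the expansion of $\|x-(a-r\zeta/\|\zeta\|)\|^2$ is correct; your proof of $(iii)\Rightarrow(i)$ by projecting an exterior point $x$ onto $A$ and checking $\|x-(a^*-r\zeta^*/\|\zeta^*\|)\|=\|\zeta^*\|+r>r$ is correct and self-contained (modulo the degenerate case $A=\R^n$, where $(ii)$--$(iv)$ hold vacuously but the family of balls is empty; this case is implicitly excluded everywhere, since an $r$-strongly convex set is bounded); and your internal-tangency argument for $(i)\Rightarrow(v)$ is correct, since $\widebar{B}(c;r)\subset\widebar{B}(x;\|x-a_1\|)$ with the two spheres meeting only at $a_1$, which forces $\far_A(x)=\{a_1\}$. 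However, the two implications that carry the real content --- $(i)\Rightarrow(ii)$ (passing from the active ball centers to an \emph{arbitrary} normal $\zeta\in N_A(a)$) and $(v)\Rightarrow(i)$ (reconstructing $A$ as an intersection of $r$-balls from uniqueness of farthest points on $\E_r(A)$) --- are in your write-up still only gestures toward \cite{frankowska,Goncharov,Nacry2025} and \cite{balashov,Nacry2025}. Since the paper itself delegates the entire proposition to those same references, this is an acceptable resolution, but be aware that as a standalone proof your proposal is incomplete at exactly those two points, and they are the nontrivial ones.
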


\begin{remark} For $A\subset\R^n$ a nonempty and compact set, and for $r>0$, one can easily see, using \eqref{farnormal}, that $$\E_r(A)=\bigcup_{\substack{a\in\bdry A\\\zeta\in N_A^A(a)\,\textnormal{unit}}} \{a-t\zeta : t>r\}.$$
\end{remark}

We conclude this section with an important result from convex analysis (see \cite[Theorem 2.2.6]{schneider}).  A boundary point $a$ of  a convex set $A$ is said to be {\it regular} if there exists a unit vector $\xi_a\in \R^n$ such that $$N_A(a)=\{\l\xi_a : \l\geq 0\}. $$  Moreover,  a supporting halfspace of a convex set $A$ is called {\it regular} if its boundary contains at least one regular boundary point of $A$.

\begin{proposition}[{\bbbp\cite[Theorem 2.2.6]{schneider}}]  \label{prop1} Let $A$ be a convex body, that is, a compact and convex set with nonempty interior. Then $A$ is the intersection of its regular supporting halfspaces. 
\end{proposition}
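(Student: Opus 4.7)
The inclusion of $A$ in the intersection of all its regular supporting halfspaces is immediate, since every supporting halfspace contains $A$. The work lies in the reverse inclusion: given $x\notin A$, produce a regular supporting halfspace that excludes $x$. Because $A$ is compact and convex, the metric projection $a_0:=\proj_A(x)$ is the unique nearest point of $A$ to $x$, and $v:=x-a_0$ is a nonzero element of $N_A(a_0)$; each $\xi\in N_A(a_0)\setminus\{0\}$ accordingly produces a supporting halfspace $\{y:\<\xi,y\>\leq\<\xi,a_0\>\}$ of $A$. The task is to select $\xi$ so that this halfspace is \emph{regular} (in the sense defined just above the proposition) and still strictly excludes $x$.

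The backbone is the following lemma: if $\xi\in\R^n\setminus\{0\}$ and the exposed face $F(\xi):=\{a\in A:\<\xi,a\>=h_A(\xi)\}$ has positive dimension, then every point of $\ri F(\xi)$ is a regular boundary point of $A$ with $N_A(a)=\{\lambda\xi:\lambda\geq 0\}$. Indeed, since $A$ is a convex body, its interior lies strictly on one side of the supporting hyperplane $\{y:\<\xi,y\>=h_A(\xi)\}$, so at $a\in\ri F(\xi)$ the tangent cone $T_A(a)$ contains every direction tangent to $F(\xi)$ at $a$ together with a transverse direction into $\inte A$; a short convex-analysis computation identifies $T_A(a)$ with the half-space $\{w:\<\xi,w\>\leq 0\}$, and taking the polar gives $N_A(a)=\{\lambda\xi:\lambda\geq 0\}$, a single ray.

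Armed with this lemma, I analyze where $v$ sits inside $N_A(a_0)$. If $v$ lies on the relative boundary of $N_A(a_0)$, the standard duality $F(\xi)=\{a\}\Longleftrightarrow\xi\in\ri N_A(a)$ forces $F(v/\|v\|)\supsetneq\{a_0\}$, so its relative interior supplies a regular point and hence the desired regular separating halfspace. If $v\in\ri N_A(a_0)$ and $a_0$ is itself regular, the supporting halfspace at $a_0$ with outer normal $v$ is already regular. Otherwise $\dim N_A(a_0)\geq 2$, and I pick a nonzero $\xi'$ on the relative boundary of $N_A(a_0)$ with $\<\xi',v\>>0$; such a $\xi'$ exists because $N_A(a_0)$ is a pointed convex cone (its polar, the tangent cone, has nonempty interior since $A$ does), so the bounded cross-section $N_A(a_0)\cap\{\xi:\<\xi,v\>=1\}$ has nonempty relative interior and thus a nonempty relative boundary. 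For this $\xi'$ we obtain $\<\xi',x\>=\<\xi',a_0\>+\<\xi',v\>>h_A(\xi')$, while $F(\xi')$ strictly contains $a_0$ (again by the $\ri$ duality), so it is of positive dimension and the lemma furnishes a regular point on its supporting hyperplane.

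The principal obstacle is the last sub-case, in which the projection $a_0$ is a non-regular corner and the external direction $v$ points deep into the interior of the normal cone: the separating direction must be perturbed onto the relative boundary of $N_A(a_0)$ while preserving both the strict separation from $x$ and the supporting property. The pointedness of $N_A(a_0)$, which is precisely what the nonempty-interior assumption on $A$ guarantees, is what permits such a perturbation to land in the open half-space $\{\<\cdot,v\>>0\}$ and thus to yield a genuine regular supporting halfspace excluding $x$.
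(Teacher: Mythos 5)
Your backbone lemma is false, and the proof collapses with it. Take $A=[0,1]^3$ and $\xi=(1,1,0)$: the exposed face $F(\xi)$ is the edge $\{(1,1,t) : t\in[0,1]\}$, of positive dimension, yet at every point $a$ of its relative interior $N_A(a)=\cone\{e_1,e_2\}$ is two-dimensional, so $\ri F(\xi)$ contains no regular point at all. The error sits in the ``short convex-analysis computation'': the convex cone generated by the directions tangent to $F(\xi)$ together with one direction into $\inte A$ is in general a \emph{proper} subcone of the halfspace $\{w : \langle\xi,w\rangle\leq 0\}$; identifying $T_A(a)$ with that halfspace would require $\dim F(\xi)=n-1$. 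This is fatal for your third sub-case: with $a_0=(1,1,1)$ a vertex of the cube and $v=(1,1,1)$, your recipe permits $\xi'=(1,1,0)$ (it lies on the relative boundary of $N_A(a_0)$ and $\langle\xi',v\rangle>0$), but the supporting hyperplane $\{y : y_1+y_2=2\}$ meets $A$ only along the edge, which consists entirely of non-regular points, so the halfspace you output is \emph{not} regular. Nothing in the construction steers $\xi'$ toward a facet normal, and no recursion on faces can be set up in general, precisely because a positive-dimensional exposed face of a convex body may contain no regular points whatsoever.

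The duality you invoke in the first sub-case is also wrong in the direction you use it: $\xi$ on the relative boundary of $N_A(a)$ does not force $F(\xi)\supsetneq\{a\}$. For $A=\{(x,y) : x^2\leq y\leq 1\}$ and the corner $a=(1,1)$ one has $N_A(a)=\cone\{(0,1),(2,-1)\}$, and the extreme-ray direction $\xi=(2,-1)$ exposes only $\{a\}$, since $2x-y\leq 1-(x-1)^2<1$ on $A\setminus\{a\}$. For context: the paper offers no proof of this proposition—it quotes Schneider—and the argument behind that citation is of a genuinely different nature. One first proves the (nontrivial) density of regular points in $\bdry A$; then, given $x\notin A$, one picks $z\in\inte A$, lets $y$ be the point where $[z,x]$ crosses $\bdry A$ (so $x=y+t(y-z)$ with $t>0$), and approximates $y$ by regular points $y_i$ with unit normals $u_i$; any cluster point $u$ of $(u_i)$ lies in $N_A(y)$ and satisfies $\langle u,x-y\rangle=t\langle u,y-z\rangle>0$, whence the regular supporting halfspace at $y_i$ already excludes $x$ for $i$ large. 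A density-plus-limiting argument of this kind, not facial duality at the projection point, is the missing idea, and it cannot be recovered by patching your lemma.
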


\section{Proofs of the main results} \label{mainproofs} In this section, we present the proofs of Theorem \ref{th1}, Corollary \ref{coro0}, Corollary \ref{coro1}, and Theorem \ref{th2}, starting with the proof of Theorem \ref{th1}.

\subsection*{Proof of Theorem \ref{th1}} Let $A\subset\R^n$ be a nonempty and closed set not reduced to a singleton, and let $r>0$. If $A$ is $r$-strongly convex, then $A$ is convex with $\inte A\not=\emptyset$ (since $A$ is not a singleton). This yields that $N_A^P(a)=N_A(a)\not=\{0\}$ for all $a\in \bdry A$. Hence, for all $a\in\bdry A$, there exists a nonzero vector $\zeta_a\in N_A^P(a)$, such that $$ \left\<\frac{\zeta_a}{\|\zeta_a\|},x-a\right\>\leq -\frac{1}{2r}\|x-a\|^2,\;\forall x\in A.$$
Therefore, $A$ is $r$-spherically supported.

We proceed to prove the converse. We assume that $A$ is $r$-spherically supported with $\inte A\not=\emptyset$. Then there exist $a_0\in A$ and $\rho_0>0$ such that $$a_0\in B(a_0;\rho_0)\subset \inte A.$$

\begin{lemma} \label{epiofA} $A$ is epi-Lipschitz.
\end{lemma}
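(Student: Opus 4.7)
The plan is to apply Lemma \ref{epilemma}: for an arbitrary $a\in\bdry A$, I must exhibit a direction $v\in\R^n$ and an $\e>0$ such that $a'+tw\in A$ whenever $a'\in A\cap B(a;\e)$, $t\in[0,\e[$, and $w\in B(v;\e)$. The natural choice is $v:=a_0-a$, pointing from $a$ into the interior ball $B(a_0;\rho_0)$. To get the uniformity required by Lemma \ref{epilemma}, I will first establish a cone-type property of $A$: for every $a'\in A$ and every $y\in B(a_0;\rho_0)$, the closed segment $[a',y]$ lies in $A$. Equivalently, $A$ is star-shaped with respect to every point of $B(a_0;\rho_0)$.

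This cone/star-shapedness property is the main obstacle, and it is where the $r$-spherical support hypothesis enters essentially. I would argue by contradiction. Suppose some $a_0'\in B(a_0;\rho_0)$ and $x\in A$ satisfy $[a_0',x]\not\subset A$. By closedness of $A$ and the fact that $a_0'\in\inte A$, there is a largest $t_0\in(0,1)$ with $a_0'+s(x-a_0')\in A$ for all $s\in[0,t_0]$, and the point $a':=a_0'+t_0(x-a_0')$ belongs to $\bdry A$. Let $\zeta_{a'}$ be the unit proximal normal at $a'$ provided by the spherical support, so that $\<\zeta_{a'},y-a'\>\leq-\frac{1}{2r}\|y-a'\|^2$ for every $y\in A$. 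Testing this inequality at $y=a_0+s\zeta_{a'}\in B(a_0;\rho_0)\subset A$ and letting $s\nearrow\rho_0$ gives $\<\zeta_{a'},a_0-a'\>\leq-\rho_0$, and therefore $\<\zeta_{a'},a_0'-a'\>\leq-\rho_0+\|a_0'-a_0\|<0$. Since $a'$ lies strictly between $a_0'$ and $x$ on the segment, the vector $x-a'$ is a strictly positive multiple of $-(a_0'-a')$, which forces $\<\zeta_{a'},x-a'\>>0$. But applying the spherical support at $y=x\in A$ yields $\<\zeta_{a'},x-a'\>\leq 0$, a contradiction.

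Once the cone property is in hand, letting $y$ range over $B(a_0;\rho_0)$ shows that for every $a'\in A$ and every $s\in[0,1]$, the ball $B(a'+s(a_0-a');s\rho_0)$ is contained in $A$. I then take $v:=a_0-a$ and choose $\e>0$ small enough, for instance $\e<\min\{\rho_0/3,1\}$. For any $a'\in A\cap B(a;\e)$ and $w\in B(v;\e)$, the triangle inequality gives $\|w-(a_0-a')\|\leq\|w-v\|+\|a'-a\|<2\e<\rho_0$, so $w\in B(a_0-a';\rho_0)$. Since $t\in[0,\e[\subset[0,1]$, the cone property yields $a'+tw\in A$, which verifies the condition of Lemma \ref{epilemma} and establishes that $A$ is epi-Lipschitz.
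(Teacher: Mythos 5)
Your proof is correct. The underlying mechanism is the same one the paper uses: a boundary point lying strictly between two points of $A$ on a segment is incompatible with the spherical-support inequality, since testing that inequality at the two endpoints forces two antiparallel inner products to be simultaneously strictly negative. However, you organize the argument genuinely differently. The paper negates the criterion of Lemma \ref{epilemma} at a bad boundary point $a$, extracts sequences $a_n\in A\cap B(a;\frac1n)$, $t_n\in[0,\frac1n[$, $w_n\in B(a_0-a;\frac1n)$ with $a_n+t_nw_n\notin A$ and $a_n+w_n\in B(a_0;\rho_0)$, finds a boundary point $b_n$ on $]a_n,a_n+w_n[$, and derives $0\leq -1$. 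You instead prove the stronger positive statement that $A$ is star-shaped with respect to every point of $B(a_0;\rho_0)$, and then verify the wedge condition of Lemma \ref{epilemma} directly and constructively with $v=a_0-a$ and $\e<\min\{\rho_0/3,1\}$. Your route avoids the sequential contradiction and yields a structural fact (star-shapedness, hence in particular connectedness of $A$) that the paper only recovers later via Lemmas \ref{Alocallyconvex}--\ref{lem3}, so it could be used to streamline that portion of the proof of Theorem \ref{th1}. One small simplification: the detour through the points $a_0+s\zeta_{a'}$ to obtain $\<\zeta_{a'},a_0-a'\>\leq-\rho_0$ is unnecessary; testing the spherical-support inequality directly at $y=a_0'\in A$ already gives $\<\zeta_{a'},a_0'-a'\>\leq-\frac{1}{2r}\|a_0'-a'\|^2<0$, since $a_0'\in\inte A$ and $a'\in\bdry A$ force $a_0'\neq a'$.
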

\begin{proof} We will use a proof by contradiction. Assume that $A$ is not  epi-Lipschitz. By Lemma \ref{epilemma}, there exists $a\in\bdry A$ such that for all $v\in\R^n$ and for all $\e>0$, one can find $a'\in A\cap B(a;\e)$, $t\in[0,\e[$ and $w\in B(v;\e)$ satisfying $$a'+tw\not\in A.$$
This yields, for $v:=a_0-a$ and $\e:=\frac{1}{n}$, the existence of $a_n\in A\cap  B(a;\frac{1}{n})$, $t_n\in[0,\frac{1}{n}[$ and $w_n\in B(v;\frac{1}{n})$ such that \begin{equation}\label{gene1}  a_n+t_nw_n\not\in A,\;\forall n\in\N^*.\end{equation}
We have, for $n$ sufficiently large, that  $$\|a_n+w_n-a_0\|= \|a_n+w_n-v-a\|\leq \|a_n-a\|+\|w_n-v\|\leq \frac{2}{n}<\rho_0.$$
Hence, $a_n+w_n\in B(a_0;\rho_0)\subset\inte A$ for $n$ sufficiently large. Combining this latter with \eqref{gene1}, we deduce that, for $n$ sufficiently large, there exits $\tau_n\in ]t_n,1[$ such that $$b_n:=a_n+\tau_nw_n\in\bdry A.$$
Since $A$ is  $r$-spherically supported, we get the existence, for $n$ sufficiently large, of a nonzero vector $\zeta_n\in N_A^P(b_n)$, see Figure \ref{Fig0},  such that $$\left\<\frac{\zeta_n}{\|\zeta_n\|}, x-b_n\right\>\leq -\frac{1}{2r}\|x-b_n\|^2,\;\forall x\in A.$$
\begin{figure}[tb!]
\centering
\includegraphics[scale=1.1]{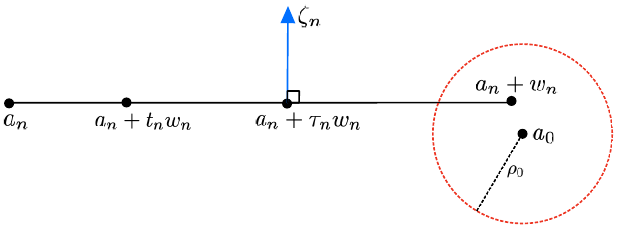}
\caption{\label{Fig0} Proof of Lemma \ref{epiofA}}
\end{figure}
This yields that, for $n$ sufficiently large $$\left\<\frac{\zeta_n}{\|\zeta_n\|}, a_n-b_n\right\>\leq -\frac{1}{2r}\|a_n-b_n\|^2\;\;\hbox{and}\;\;\left\<\frac{\zeta_n}{\|\zeta_n\|}, a_n+w_n-b_n\right\>\leq -\frac{1}{2r}\|a_n+w_n-b_n\|^2.$$
Then, for $n$ sufficiently large $$\left\<\frac{\zeta_n}{\|\zeta_n\|}, -w_n\right\>\leq -\frac{\tau_n}{2r}\|w_n\|^2\;\;\hbox{and}\;\;\left\<\frac{\zeta_n}{\|\zeta_n\|}, w_n\right\>\leq -\frac{(1-\tau_n)}{2r}\|w_n\|^2.$$
 Hence, for $n$ sufficiently large, we have $$0\leq\frac{\tau_n}{2r}\|w_n\|^2\leq \left\<\frac{\zeta_n}{\|\zeta_n\|}, w_n\right\>\leq -\frac{(1-\tau_n)}{2r}\|w_n\|^2\leq0.$$
 Since $v\not=0$, we can assume that $w_n\not=0$ for $n$ sufficiently large. Then,  for $n$ sufficiently large, we have $$\tau_n\leq -(1-\tau_n),\;\hbox{and hence}\;\,0\leq -1,$$
 which gives the desired contradiction. Therefore, $A$ is epi-Lipschitz. \end{proof}

Now since $A$ is $r$-spherically supported, we have, for every $a\in\bdry A$, the existence of a nonzero vector $\zeta_a\in N_A^P(a)$, such that $$ \left\<\frac{\zeta_a}{\|\zeta_a\|},x-a\right\>\leq -\frac{1}{2r}\|x-a\|^2,\;\forall x\in A \;\;\;\;\left[\hbox{or equivalently}\; A\subset \widebar{B}\left(a-r\frac{\zeta_a}{\|\zeta_a\|};r\right)\right].$$
Then, $A$ is compact, and for every $a\in\bdry A$, there exists a nonzero vector $\zeta_a\in N_A^P(a)$, such that $$ \left\<\frac{\zeta_a}{\|\zeta_a\|},x-a\right\>\leq -\frac{1}{2r}\|x-a\|^2\leq \frac{1}{2\rho}\|x-a\|^2,\,\;\forall x\in A\;\hbox{and}\;\forall \rho>0.$$
This yields that $A$ satisfies the exterior $\rho$-sphere condition for any $\rho>0$.  

\begin{lemma} \label{Alocallyconvex} $A$ is locally convex.
\end{lemma}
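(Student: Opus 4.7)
The plan is to exploit the conjunction of the epi-Lipschitz property established in Lemma \ref{epiofA} and the exterior $\rho$-sphere condition for every $\rho>0$, and to invoke Lemma \ref{lem1} at each boundary point of $A$.

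Fix $a\in\bdry A$. The epi-Lipschitz property at $a$ produces, after an orthogonal change of coordinates, an open convex bounded set $U\subset\R^{n-1}$ around the projection of $a$ and a $K$-Lipschitz function $f\colon U\f\R$ such that $A$ coincides with $\epi f$ on a neighborhood $V$ of $a$. Shrinking $U$ and $V$ if necessary, I would arrange matters so that every boundary point of $\epi f$ which is relevant —namely, the graph points $(\bar x,f(\bar x))$ lying in $V$— is a boundary point of $A$ at which the proximal normal cones of $\epi f$ and of $A$ coincide (since proximal normality is a local notion). Because $A$ has been shown to satisfy the exterior $\rho$-sphere condition for every $\rho>0$, the set $\epi f$ inherits this property at each of its boundary points.

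Applying Lemma \ref{lem1} to $f$ on $U$ with the parameter $\rho$, I then obtain that $\epi f$ is $\frac{\rho}{(1+K^2)^{3/2}}$-prox-regular for \emph{every} $\rho>0$. Writing the defining prox-regular inequality gives
$$\left\<\frac{\zeta}{\|\zeta\|},x-a'\right\>\leq \frac{(1+K^2)^{3/2}}{2\rho}\|x-a'\|^2$$
for every $a'\in\bdry(\epi f)$, every nonzero $\zeta\in N_{\epi f}^P(a')$, and every $x\in\epi f$. Letting $\rho\f+\infty$, the right-hand side tends to $0$, so that $\left\<\zeta/\|\zeta\|,x-a'\right\>\leq 0$, which is precisely the convexity condition for $\epi f$. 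Intersecting the convex set $\epi f$ with a small open convex neighborhood $V'\subset V$ of $a$ yields $A\cap V'=\epi f\cap V'$, a convex set. This delivers the desired local convexity of $A$ at $a$.

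The main obstacle is the careful epi-Lipschitz setup: one must choose $U$ small enough so that the portion of $\bdry(\epi f)$ that governs its prox-regularity actually consists of boundary points of $A$ with matching proximal normal cones, thereby transferring the exterior sphere condition globally from $A$ to $\epi f$. Once this transfer is secured, the final step is the clean observation that being $s$-prox-regular for every $s>0$ is equivalent to convexity.
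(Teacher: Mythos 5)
Your argument is correct in its first half and coincides there with the paper's proof: localize $A$ as the epigraph of a $K$-Lipschitz function $f$, feed the exterior $\rho$-sphere condition into Lemma \ref{lem1} for every $\rho>0$, and let $\rho\f+\infty$. Where you genuinely diverge is the endgame. You conclude that $\epi f$ itself is convex and then intersect it with a convex neighborhood $V'$; the paper instead retains the limiting inequality $\<\zeta,y-x\>\leq 0$ only for boundary points and proximal normals \emph{of $A$}, proves the transversality condition $N_A^P(x)\cap -N^P_{\widebar{B}(a;\delta_a)}(x)=\{0\}$, and invokes Federer's intersection theorem \cite[Theorem 4.10]{fed} to upgrade this to $\varrho$-prox-regularity of $\widebar{B}(a;\delta_a)\cap A$ for every $\varrho>0$, hence convexity. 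Your route buys a more elementary finish (no Federer, no transversality argument); the paper's route buys not having to assert full prox-regularity of $\epi f$ as a set in $\R^n$.

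That said, your version places all the weight on the step you yourself flag as ``the main obstacle'' and do not actually carry out. The exterior $\rho$-sphere condition is a \emph{global} inequality over all of $\epi f$, and matching proximal normal cones locally does not by itself transfer it from $A$: one has $A\cap V=\epi f\cap V$, but $\epi f\not\subset A$, since points of $\epi f$ far above the graph need not belong to $A$. The repair is short and worth writing down: after shrinking $U$ so that the entire graph of $f$ over $U$ lies in $V$, every $z\in\epi f$ can be written as $g(z)+te_n$ with $t\geq 0$, $e_n$ the upward direction, and $g(z)$ a graph point, hence $g(z)\in\bdry A$; the spherical-support vector $\zeta_{a'}$ at a graph point $a'$ satisfies $\<\zeta_{a'},e_n\>\leq 0$ and $\<\zeta_{a'},g(z)-a'\>\leq -\frac{1}{2r}\|g(z)-a'\|^2\leq 0$, so $\<\zeta_{a'},z-a'\>\leq 0\leq\frac{1}{2\rho}\|z-a'\|^2$ for every $\rho>0$. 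A second, smaller point: ``$s$-prox-regular for every $s>0$ implies convex'' is usually stated for closed subsets of $\R^n$, whereas $\epi f$ is closed only relative to $U\times\R$; the cleanest patch is the lower-$C^2$ characterization of \cite{csw}, which yields that $f+\frac{c}{2}\|\cdot\|^2$ is convex on $U$ for every $c>0$, hence that $f$, and therefore $\epi f$, is convex. With these two details supplied, your proof is a valid and somewhat simpler alternative to the published one.
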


\begin{proof} Let $a\in\bdry A$. Since $A$ is epi-Lipschitz, there exist $\delta_a>0$ and $K_a>0$ such that $\widebar{B}(a;\delta_a)\cap A$ can be viewed as the epigraph of $K_a$-Lipschitz function. Then, as $A$  satisfies the $\rho$-exterior sphere condition for any $\rho>0$, we deduce, using Lemma \ref{lem1}, that $\widebar{B}(a;\delta_a)\cap A$ is $\frac{\rho}{(1+K_a^2)^{\frac{3}{2}}}$-prox-regular for any $\rho>0$ in the following sense: For all $x\in (\bdry A)\cap \widebar{B}(a;\delta_a)$ and for all nonzero vectors $\zeta\in N^P_A(x)$, we have $$\left\<\frac{\zeta}{\|\zeta\|},y-x\right\>\leq\frac{(1+K_a^2)^{\frac{3}{2}}}{2\rho} \|y-x\|^2,\;\forall y\in \widebar{B}(a;\delta_a)\cap A,\;\forall \rho>0.$$
Taking $\rho\f+\infty$, we deduce that for all $x\in (\bdry A)\cap \widebar{B}(a;\delta_a)$ and for all $\zeta\in N^P_A(x)$, we have \begin{equation}\label{souman}\<\zeta,y-x\>\leq0,\;\forall y\in \widebar{B}(a;\delta_a)\cap A.\end{equation}
We claim that $$N_A^P(x)\cap -N_{ \widebar{B}(a;\delta_a)}^P(x)=\{0\},\;\forall x\in \widebar{B}(a;\delta_a)\cap A. $$
Indeed, if not then  there exist $x\in \bdry A$ and a unit vector $\zeta\in  N_A^P(x)$ such that $$\|x-a\|=\delta_a\,\;\hbox{and}\;\,\zeta=\frac{a-x}{\delta_a}.$$
Hence, using \eqref{souman}, we get that $$\frac{1}{\delta_a}\<a-x,a-x\>=\frac{1}{\delta_a}\|a-x\|^2=\delta_a\leq 0,$$ 
which gives the desired contradiction. Now applying \cite[Theorem 4.10]{fed}, we deduce the existence of $\eta_a>0$ such that $$\widebar{B}(a;\delta_a)\cap A\;\hbox{is}\;\frac{\rho \eta_a}{2(1+K_a^2)^{\frac{3}{2}}}\hbox{-prox-regular for all}\;\rho>0.$$ This gives that $\widebar{B}(a;\delta_a)\cap A$ is $\varrho$-prox-regular for all $\varrho>0$, and hence, $\widebar{B}(a;\delta_a)\cap A$ is convex. Therefore, $A$ is locally convex. \end{proof}

Now, we consider $(A_i)_{i\in I}$ the family of the connected components of $A$. It is known that for each $i\in I$, we have $A_i$ is nonempty, closed (and then compact), and connected. Moreover, $(A_i)_{i\in I}$ is a partition of $A$, that is, $$A_i\cap A_j=\emptyset,\;\forall i\not=j\in I,\;\hbox{and}\;\bigcup_{i\in I} A_i=A.$$

\begin{lemma}\label{lem2} We have$\sp:$ \begin{enumerate}[$(i)$]
\item $\inte A=\bigcup_{i\in I} \inte A_i$ and $\bdry A=\bigcup_{i\in I}\bdry A_i$. 
%\item $A=\clo(\inte A)$, and $A_i=\clo(\inte A_i)$ for all $i\in I$.
\item $A_i$ is convex for all $i\in I$.
\item $A_i$ is $r$-strongly convex for all $i\in J:=\{j\in I : \inte A_j\not=\emptyset\}$. 
\end{enumerate}
\end{lemma}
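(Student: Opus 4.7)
The proof splits into three steps, with $(iii)$ the substantive one. The challenge is that the spherical support hypothesis gives only \emph{one} normal at each boundary point, while $r$-strong convexity demands a condition at \emph{every} normal; I plan to bridge this gap by restricting to regular boundary points of each convex component, where the normal cone is a ray, and then invoking Proposition \ref{prop1}.

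For part $(i)$, local convexity of $A$ (Lemma \ref{Alocallyconvex}) makes $A$ locally connected, so each $A_i$ is open (as well as closed) in $A$; pick open $U_i \subset \R^n$ with $A_i = U_i \cap A$. If $x \in \inte A$, an open ball $B(x;\rho) \subset A$ is connected and so lies in the component $A_i$ containing $x$, giving $x \in \inte A_i$; the reverse inclusion is immediate. If $a \in \bdry A_j$, any sequence $x_n \to a$ with $x_n \notin A_j$ eventually lies in $U_j$, forcing $x_n \in A^c$, so $a \in \bdry A$; thus $\bdry A_j \subset \bdry A$ (used below). Combining these with disjointness of the $A_i$'s gives $\bdry A = A \setminus \inte A = \bigcup_i (A_i \setminus \inte A_i) = \bigcup_i \bdry A_i$.

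For part $(ii)$, each $A_i$ is closed, connected, and inherits local convexity: for $a \in A_i$, the convex neighborhood $\widebar{B}(a;\delta_a)\cap A$ from Lemma \ref{Alocallyconvex} is connected and contains $a$, hence sits inside $A_i$ and equals $\widebar{B}(a;\delta_a)\cap A_i$. By the classical Tietze--Nakajima theorem, a closed, connected, locally convex subset of $\R^n$ is convex, so $A_i$ is convex.

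For part $(iii)$, fix $j \in J$, so $A_j$ is a convex body. By Proposition \ref{prop1}, $A_j$ is the intersection of its regular supporting halfspaces; I will thicken each such halfspace into a closed ball of radius $r$ still containing $A_j$. Let $a$ be a regular boundary point of $A_j$ with unit outward normal $\xi_a$, so $N_{A_j}(a) = \{\lambda \xi_a : \lambda \geq 0\}$. Since $a \in \bdry A_j \subset \bdry A$, the spherical support hypothesis yields a nonzero $\zeta_a \in N_A^P(a)$ with $A \subset \widebar{B}(a - r\zeta_a/\|\zeta_a\|;\,r)$. Because $A_j \subset A$, we have $N_A^P(a) \subset N_{A_j}^P(a) = N_{A_j}(a)$, which \emph{forces} $\zeta_a/\|\zeta_a\| = \xi_a$. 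Hence $A_j \subset \widebar{B}(a - r\xi_a;\,r) \subset \{x : \langle \xi_a, x-a\rangle \leq 0\}$, where the second inclusion is the elementary fact that a closed ball lies in the tangent halfspace at any of its boundary points. Intersecting over all regular boundary points of $A_j$ and applying Proposition \ref{prop1} sandwiches
\[
A_j \;\subset\; \bigcap_{a \text{ regular}} \widebar{B}(a - r\xi_a;\,r) \;\subset\; \bigcap_{a \text{ regular}} \{x : \langle \xi_a, x-a\rangle \leq 0\} \;=\; A_j,
\]
exhibiting $A_j$ as an intersection of closed $r$-balls, hence as $r$-strongly convex.
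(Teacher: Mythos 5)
Your proposal is correct and follows essentially the same route as the paper: part $(i)$ via connectedness of balls and the partition into components, part $(ii)$ via inherited local convexity plus Tietze--Nakajima, and part $(iii)$ via Proposition \ref{prop1}, the identification $\zeta_a/\|\zeta_a\|=\xi_a$ at regular boundary points (forced by $N_A^P(a)\subset N_{A_j}(a)$), and the sandwich between the intersection of $r$-balls and the intersection of supporting halfspaces. The only cosmetic differences are that you intersect the balls over regular boundary points only, whereas the paper sandwiches the intersection over all boundary points, and your derivation of $\bdry A=\bigcup_i\bdry A_i$ is phrased via openness of components in $A$ rather than the paper's direct set computation.
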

\begin{proof} $(i)$: Clearly we have $$\bigcup_{i\in I} \inte A_i \subset \inte A.$$ For the reverse inclusion, let $a\in\inte A$. Then there exists $\rho>0$ such that $B(a;\rho)\subset A$. Since the open ball $B(a;\rho)$ is connected, we deduce the existence of $j\in I$ such that $a\in B(a;\rho)\subset A_j$. This yields that $$a\in\inte A_j\subset \bigcup_{i\in I} \inte A_i.$$ 

We proceed to prove that $\bdry A=\bigcup_{i\in I}\bdry A_i$. We have \begin{eqnarray*} \bdry A=A\cap (\inte A)^c&=& \left(\bigcup_{i\in I} A_i\right)\cap \left( \bigcap_{j\in I} (\inte A_j)^c \right)\\&=&\bigcup_{i\in I} \left( A_i \cap \bigcap_{j\in I} (\inte A_j)^c \right)\\&=& \bigcup_{i\in I} A_i \cap (\inte A_i)^c\qquad[\hbox{since}\;A_i\subset (\inte A_j)^c\,\;\hbox{for}\;i\not=j]\\&=& \bigcup_{i\in I}\bdry A_i. \end{eqnarray*}
$(ii)$: Let $i\in I$. Since $A_i$ is connected and using Tietze-Nakajima theorem, see \cite{Tietze,Nakajima}, it is sufficient to prove that $A_i$ is locally convex. We fix $a\in\bdry A_i$. Then $a\in\bdry A$, and hence by Lemma \ref{Alocallyconvex}, there exists $\delta_a>0$ such that $\widebar{B}(a;\delta_a)\cap A$ is convex. Let $x, y\in \widebar{B}(a;\delta_a)\cap A_i\subset \widebar{B}(a;\delta_a)\cap A$. We have $$[x,y]\subset \widebar{B}(a;\delta_a)\cap A= \bigcup_{j\in I} \widebar{B}(a;\delta_a)\cap A_j.$$
As all the points of the interval $[x,y]$ belong to the same connected component of $A$, we deduce that $[x,y]\subset A_i$, and hence,  $[x,y]\subset \widebar{B}(a;\delta_a)\cap A_i$. Therefore, $A_i$ is convex.\vspace{0.1cm}\\
$(iii)$ Let $i\in J$. For $a\in\bdry A_i$, we have $a\in\bdry A$, and hence, there exists a nonzero vector $\zeta_a\in N_A^P(a)\subset N_{A_i}^P(a)=N_{A_i}(a)$, such that $$A_i\subset A\subset \widebar{B}\left(a-r\frac{\zeta_a}{\|\zeta_a\|};r\right).$$
This yields that \begin{equation}\label{eq1} A_i\subset\bigcap_{a\in\bdry A_i} \widebar{B}\left(a-r\frac{\zeta_a}{\|\zeta_a\|};r\right). \end{equation}
Having $\inte A_i\not=\emptyset$, we deduce that $A_i$ is a convex body. Hence, by Proposition \ref{prop1}, we have $$A_i=\bigcap_{\substack{a\in\bdry A_i\\a\,\textnormal{is regular}\\\xi_a\in N_{A_i}(a)\,\textnormal{unit}}} \{x\in\R^n : \<\xi_a,x-a\>\leq 0\}.$$
Since for $a\in\bdry A_i$ regular, we have $\xi_a=\frac{\zeta_a}{\|\zeta_a\|}$, we deduce that \begin{eqnarray*} A_i&=&\bigcap_{\substack{a\in\bdry A_i\\a\,\textnormal{is regular}}} \{x\in\R^n : \<\zeta_a,x-a\>\leq 0\}\\ &\supset& \bigcap_{\substack{a\in\bdry A_i\\a\,\textnormal{is regular}}}  \widebar{B}\left(a-r\frac{\zeta_a}{\|\zeta_a\|};r\right)\;\supset  \bigcap_{a\in\bdry A_i} \widebar{B}\left(a-r\frac{\zeta_a}{\|\zeta_a\|};r\right). \end{eqnarray*}
Combining this latter with \eqref{eq1}, we conclude that  \begin{equation*}A_i=\bigcap_{a\in\bdry A_i} \widebar{B}\left(a-r\frac{\zeta_a}{\|\zeta_a\|};r\right). \end{equation*}
Therefore, $A_i$ is $r$-strongly convex. \end{proof}

\begin{lemma}\label{lem3} The set of indices $J$ is a singleton. 
\end{lemma}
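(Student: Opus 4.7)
The plan is to exploit the proof of Lemma \ref{lem2}$(iii)$ more aggressively than was needed there. The key observation is that the balls appearing in the representation $A_i = \bigcap_{a\in\bdry A_i} \widebar{B}(a-r\zeta_a/\|\zeta_a\|;r)$ are produced by the $r$-spherical support property of $A$ (not of $A_i$), and hence each of them contains the entire set $A$, not merely the component $A_i$.

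First, I would check that $J$ is nonempty: since $\inte A\not=\emptyset$, Lemma \ref{lem2}$(i)$ yields some $j\in I$ with $\inte A_j\not=\emptyset$, so $j\in J$. Now fix any $i\in J$. Because $A$ is $r$-spherically supported, for each $a\in\bdry A_i\subset\bdry A$ there is a nonzero $\zeta_a\in N_A^P(a)$ with
\[
A\subset \widebar{B}\left(a-r\frac{\zeta_a}{\|\zeta_a\|};r\right).
\]
Intersecting over all $a\in\bdry A_i$ gives
\[
A\subset \bigcap_{a\in\bdry A_i} \widebar{B}\left(a-r\frac{\zeta_a}{\|\zeta_a\|};r\right).
\]
But the proof of Lemma \ref{lem2}$(iii)$ established (using Proposition \ref{prop1} applied to the convex body $A_i$) that this intersection is precisely $A_i$. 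Hence $A\subset A_i$.

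Combined with the trivial inclusion $A_i\subset A$, this forces $A=A_i$. Since $(A_j)_{j\in I}$ is a partition of $A$ into nonempty sets, this means $I=\{i\}$, and therefore also $J=\{i\}$, which is what we wanted.

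I do not expect any real obstacle here. The only subtlety is the conceptual one already highlighted: the balls realizing the $r$-spherical support at boundary points of $A_i$ are balls of radius $r$ containing all of $A$, so their intersection, which equals $A_i$ by the argument in Lemma \ref{lem2}$(iii)$, must also contain the entire $A$. No further nonsmooth analysis is needed; the result is a direct consequence of what has already been proved.
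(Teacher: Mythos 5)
Your proof is correct, but it takes a genuinely different (and shorter) route than the paper. The paper argues by contradiction: assuming $J$ contains two indices $i\neq j$, it picks $a_j\in A_j$, invokes Proposition \ref{prop1} a second time to produce a \emph{regular} boundary point $a_i$ of $A_i$ whose supporting halfspace excludes $a_j$, identifies $N_A^P(a_i)=N_{A_i}(a_i)=\{\lambda\xi_{a_i}:\lambda\geq0\}$, and then contradicts this separation with the spherical-support inequality $\<\xi_{a_i},a_j-a_i\>\leq-\frac{1}{2r}\|a_j-a_i\|^2<0$. You instead exploit the identity $A_i=\bigcap_{a\in\bdry A_i}\widebar{B}\bigl(a-r\zeta_a/\|\zeta_a\|;r\bigr)$ already established in the proof of Lemma \ref{lem2}$(iii)$ for $i\in J$, together with the observation (explicitly present in the derivation of \eqref{eq1}) that each of those balls contains all of $A$, not just $A_i$; intersecting gives $A\subset A_i$, hence $A=A_i$ and $I=\{i\}$. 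This avoids a second appeal to Proposition \ref{prop1} and to the normal-cone identification, and it yields the slightly stronger conclusion that $I$ itself is a singleton, which the paper only recovers afterwards via $A=\clo(\inte A)$. Your nonemptiness argument for $J$ (from $\inte A\neq\emptyset$ and Lemma \ref{lem2}$(i)$) is also valid and marginally more direct than the paper's route through epi-Lipschitzness. No gaps.
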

\begin{proof} Since $A$ is epi-Lipschitz, we have that $A=\clo(\inte A)$. This yields, using the nonemptiness of $A$ and Lemma \ref{lem2}$(i)$, that $J$ is nonempty. If $J$ is not a singleton, then $A$ has at least two connected components $A_i$ and $A_j$ that are convex bodies. Let $a_j\in A_j$. There exists $a_i\in\bdry A_i$ regular such that \begin{equation} \label{eq3} a_j\not\in\{x\in\R^n : \<\xi_{a_i},x-a_i\>\leq 0\},\end{equation} where $\xi_{a_i}$ is unit satisfying $N_{A_i}(a_i)=\{\l\xi_{a_i} : \l\geq 0\}$, see Figure \ref{Fig1}.
\begin{figure}[tb!]
\centering
\includegraphics[scale=0.7]{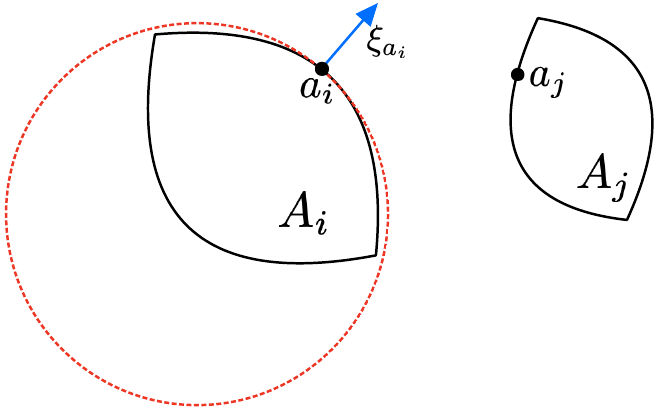}
\caption{\label{Fig1} Proof of Lemma \ref{lem3}}
\end{figure}
To see this, it is sufficient to assume the negation and use Proposition \ref{prop1} to deduce that $$a_j\in \bigcap_{\substack{a\in\bdry A_i\\a\,\textnormal{is regular}\\\xi_a\in N_{A_i}(a)\,\textnormal{unit}}} \{x\in\R^n : \<\xi_a,x-a\>\leq 0\}=A_i,$$
which contradicts $A_i\cap A_j=\emptyset$. Now, since $N_A^P(a_i)\subset N_{A_i}(a_i)$ and $N_A^P(a_i)\not=\{0\}$ (as $A$ is $r$-spherically supported), we get  that $$N_A^P(a_i)=N_{A_i}(a_i)=\{\l\xi_{a_i} : \l\geq 0\}.$$
Add to this that $A$ is $r$-spherically supported, we deduce that $$\<\xi_{a_i},x-a_i\>\leq -\frac{1}{2r}\|x-a_i\|^2,\;\,\forall x\in A.$$
Then, $$\<\xi_{a_i},a_j-a_i\>\leq -\frac{1}{2r}\|a_j-a_i\|^2<0,$$
which contradicts \eqref{eq3}. Hence, $J$ is a singleton $\{i_0\}$. \end{proof}

Now, from Lemma \ref{lem2}$(i)$, we have that $\inte A=\inte A_{i_0}$. This yields that $$A=\clo(\inte A)=\clo(\inte A_{i_0})=A_{i_0}.$$ 
Therefore, by Lemma \ref{lem2}$(iii)$, we conclude that $A$ is $r$-strongly convex. The proof of Theorem \ref{th1} is terminated. \hfill $\square$

\subsection*{Proof of Corollary \ref{coro0}} Let $A\subset\R^n$ be a nonempty and closed set not reduced to a singleton, and let $r>0$.  If $A$ is $r$-strongly convex, then using Theorem \ref{th1}, we have that $A$ is $r$-spherically supported with $\inte A\not=\emptyset$. Add to this latter that $A$ is convex, we deduce that $A$ is epi-Lipschitz. For the converse, assume that $A$ is epi-Lipschitz and $r$-spherically supported. Then, $A$ is $r$-spherically supported and $A=\clo(\inte A)$. This yields, since $A$ is nonempty, that $A$ is $r$-spherically supported with $\inte A\not=\emptyset$. By Theorem \ref{th1}, we conclude that $A$ is $r$-strongly convex.  The proof of Corollary \ref{coro0} is terminated. \hfill $\square$

\subsection*{Proof of Corollary \ref{coro1}}  Let $A\subset\R^n$ be a nonempty and closed set, and let $r>0$. Using Theorem \ref{th1}, we clearly have that if $A$ is $r$-strongly convex, then $A$ is convex and $r$-spherically supported. For the converse, assume that $A$ is convex and $r$-spherically supported. If $A$ is a singleton, then we directly deduce that $A$ is $r$-strongly convex. Now, we assume that $A$ is not a singleton. We claim that $\inte A\not=\emptyset$. Indeed, if not then $A=\bdry A$. Since $A$ is not a singleton, $A$ contains two distinct points $a$ and $a'$. The convexity of $A$ yields that $c:=\frac{a+a'}{2}\in A=\bdry A$. Then, using that $A$ is  $r$-spherically supported, we deduce the existence of a unit vector $\zeta_c\in N_A(c)$ such that \[ \left\<\zeta_c,x-c\right\>\leq -\frac{1}{2r}\|x-c\|^2,\;\forall x\in A.\]
This gives that \[\<\zeta_c,a-c\>\leq -\frac{1}{2r}\|a-c\|^2\;\;\hbox{and}\;\; \<\zeta_c,a'-c\>\leq -\frac{1}{2r}\|a'-c\|^2.\]
Hence, \[0<\frac{1}{4r}\|a-a'\|^2\leq \<\zeta_c,a-a'\>\leq -\frac{1}{4r}\|a-a'\|^2<0,\]
which gives the desired contradiction. Therefore, $\inte A\not=\emptyset$. This yields, using Theorem \ref{th1}, that $A$ is $r$-strongly convex. The proof of Corollary \ref{coro1} is terminated. \hfill $\square$

\subsection*{Proof of Theorem \ref{th2}} Let $A\subset\R^n$ be a nonempty and closed set, and let $r>0$. Assume that $A$ is $r$-strongly convex. Hence, it is evident that $A$ is both convex and bounded.  If $A$ is not $r$-negatively $\E_r(A)$-convex, then there exist two distinct points $a, a'\in\bdry A$, two unit vectors $\zeta_a \in N_A(a)$ and $\zeta_{a'}\in N_A(a')$, and two positive real numbers $t>r$ and $t'>r$, such that $$x=a-t\zeta_a=a'-t'\zeta_{a'}\in \E_r(A).$$
Since $A$ is $r$-strongly convex, we have that $$A\subset \widebar{B}(a-r\zeta_a;r).$$
This yields that $$\|a-r\zeta-y\|\leq r,\;\,\forall y\in A. $$
Add to this that $\|a-r\zeta-a\|=r$, we conclude that $$\df_A(a-r\zeta)=r\,\;\hbox{and}\,\;a\in\far_A(a-r\zeta_a).$$
We have \begin{eqnarray*} a\in\far_A(a-r\zeta_a)&\implies& a=\far_A(a-r\zeta_a+\tau r\zeta_a),\;\forall \tau>0 \\&\implies&   a=\far_A(a-(\tau+1)r\zeta_a),\;\forall \tau>0 \\&\implies& a=\far_A(a-t\zeta_a)\;\;\left(\hbox{for}\;\tau=\frac{t}{r}-1>0\right) \\ &\implies& a=\far_A(x).\end{eqnarray*}
Similarly, $a'=\far_A(x)$. This yields that $a=a'$, contradiction.

For the converse, assume that $A$ is convex, bounded, and $r$-negatively $\E_r(A)$-convex. In order to prove that $A$ is $r$-strongly convex, it is sufficient to prove that $\far_A(x)$ is a singleton for all $x\in\E_r(A)$, see Proposition \ref{prop0}. Let $x\in\E_r(A)$. If $\far_A(x)$ is not a singleton, then it contains two distinct points $a$ and $a'$. Let $\zeta_a:=\frac{a-x}{\|a-x\|}\in N_A(a)$ and $\zeta_{a'}:=\frac{a'-x}{\|a'-x\|}\in N_A(a')$. We have $$x=a-\|a-x\|\zeta_a= a'-\|a'-x\|\zeta_{a'}\;\,\hbox{and}\;\,\|a-x\|=\|a'-x\|=\df_A(x)>r,$$
which contradicts the $r$-negative $\E_r(A)$-convexity of $A$. The proof of Theorem \ref{th2} is terminated. \hfill $\square$

\end{document}